\numberwithin{equation}{section}
\numberwithin{figure}{section}
\numberwithin{table}{section}
\def\<{\left\langle}
\def\>{\right\rangle}
\def\NChz2d{{[\mathcal{NC}}^h_0]^2}
\def\NChz{{\mathcal NC}^h_0}
\def\Om{\Omega}
\def\O{\Omega}
\def\Span{\operatorname{Span}}
\def\Tau{{\mathcal T}}
\def\and{\quad\text{and}\quad}
\def\b1{\mathbf 1}
\def\bmu{{\boldsymbol\mu}}
\def\bx{\mathbf x}
\def\bx{\mathbf x}
\def\diam{\operatorname{diam}}
\def\diam{\operatorname{diam}}
\def\dim{\operatorname{dim}\,}
\def\dim{\operatorname{dim}\,}
\def\div{\nabla\cdot}
\def\div{\nabla\cdot}
\def\dsig{\, \operatorname{d\sigma}}
\def\grad{\nabla\,}
\def\grad{\nabla\,}
\def\hat{\widehat}
\def\hat{\widehat}
\def\p{\partial}
\def\p{\partial}
\def\tNChz2d{\widetilde {[\mathcal{NC}}^h_0]^2}
\def\tNChz2d{\widetilde {[\mathcal{NC}}^h_0]^2}
\newcolumntype{x}[1]{>{\centering\hspace{0pt}}p{#1}} 
\newcommand{\bal}{\begin{aligned}}
\newcommand{\bes}{\begin{eqnarray*}}
\newcommand{\be}{\begin{eqnarray}}
\newcommand{\bss}{\begin{subeqnarray*}}
\newcommand{\bs}{\begin{subeqnarray}}
\newcommand{\eal}{\end{aligned}}
\newcommand{\ees}{\end{eqnarray*}}
\newcommand{\ee}{\end{eqnarray}}
\newcommand{\eq}[1]{\begin{eqnarray}\label{#1}}
\newcommand{\ess}{\end{subeqnarray*}}
\newcommand{\es}{\end{subeqnarray}}
\newcommand{\figref}[1]{Figure~\ref{#1}}
\newcommand{\qe}{\end{eqnarray}}
\newcommand{\secref}[1]{Section~\ref{#1}}
\newcommand{\vertiii}[1]{{\left\vert\kern-0.25ex\left\vert\kern-0.25ex\left\vert #1 \right\vert\kern-0.25ex\right\vert\kern-0.25ex\right\vert}}
\newcounter{saveeqn}
\newtheorem{theorem}{Theorem}[section]
\newtheorem{example}[theorem]{Example}
\newtheorem{proposition}[theorem]{Proposition}
\newtheorem{remark}[theorem]{Remark}
\def\til{\widetilde}
\begin{document}

\allowdisplaybreaks

\newif\iflong
\longfalse

\title{Algebraic Multiscale Method for two--dimensional elliptic problems}
\author{Kanghun Cho\thanks{
 Samsung Fire \& Marine Insurance Co., Ltd., 14, Seocho-daero 74-gil,
 Seocho-gu, Seoul 06620,  Korea;
},$\quad$
Imbunm Kim\thanks{Department of Mathematics, 
Seoul National University, Seoul 08826, South Korea;},$\quad$
Raehyun Kim%
\thanks{Department of Mathematics, 
  University of California, Berkeley, CA 94720-3840, USA;
Emails: serein@snu.ac.kr, ikim@snu.ac.kr, rhkim79@math.berkeley.edu, sheen@snu.ac.kr}
,$\quad$
Dongwoo Sheen${}^\dag$
}

\maketitle



\begin{abstract}
We introduce an algebraic multiscale method for two--dimensional
problems. The method uses the generalized multiscale finite element
method based on the quadrilateral nonconforming finite element
spaces. Differently from the one--dimensional algebraic multiscale method, we apply the dimension reduction techniques to construct multiscale basis functions. Also moment functions are considered to impose continuity between local basis functions. Some representative numerical results are presented.
\end{abstract}

{\bf Keywords.} Multiscale, algebraic multiscale method, heterogeneous coefficient.

%
%

\section{Introduction}
In this paper we propose an algebraic multiscale method for
two--dimensional elliptic problems. This is an extension of our
previous work for the one--dimensional case \cite{cho2022algebraic1d}.
We consider the multiscale model problem given by
\begin{equation}\label{eq:model_problem}
-\div(\kappa(x) \nabla u)=f \text{ in } \Om,
\end{equation}
where $\kappa$ is a heterogeneous coefficient and $f \in H^{-1}(\Om).$

Assume that we are only given a linear system \eqref{eq:phi-psi-vh}, which is obtained from
a discretization of micro-scale governing equation, but any detailed
information on the coefficient $\kappa$ and the source term $f$ are
not given. In this situation, our target is to construct a macro-scale linear
system
by using the information and finally to find numerical solutions which possess similar properties of the solutions obtained by multiscale methods. 

 Multiscale methods have been actively developed in various manners including heterogeneous multiscale methods \cite{abdulle2009finite, abdulle2012heterogeneous}, multiscale hybridizable discontinuous Galerkin methods \cite{cho2020multiscale, efendiev2015spectral, efendiev2015multiscale}, and multiscale finite element methods \cite{efendiev2009multiscale,
efendiev2000convergence,hou1999convergence}. Here we adopt the
generalized multiscale finite element method(GMsFEM)
\cite{efendiev2013generalized, lee2017nonconforming} based on
nonconforming finite element space  \cite{douglas1999nonconforming,
  park2003p}.
However, the method described in this paper applies for the conforming
GMsFEM as well.

The GMsFE spaces consist of snapshot
function spaces, offline function spaces, and moment function
spaces. First, snapshot functions are obtained by solving
$\kappa-$harmonic problems in each macro element. Then offline
functions are constructed by applying suitable dimension reduction
techniques to snapshot function space. In one--dimensional case we
choose offline functions identical to the snapshot functions since
there are only two snapshot functions in each macro element. However
in higher dimension, it is necessary to apply such techniques since we
have many more snapshot functions which yields huge computational
cost. The moment functions are also needed in order to impose
continuity between local offline functions, which make a remarkable
difference between the 1D and 2D cases.

This paper is organized as follows. In Section 2, we briefly review the nonconforming generalized multiscale finite element method(GMsFEM) based on finite element spaces. Then the algebraic multiscale method for two--dimensional elliptic problem is introduced in Section 3 following the framework of GMsFEM. Section 4 is devoted to energy norm error estimate of the proposed method.
In Section 5, representative numerical results are
presented. Conclusions are given in Section 6.

%
%
\section{Preliminaries}

In this section we briefly review the framework of the generalized
multiscale finite element method(GMsFEM) based on the quadrilateral
nonconforming finite element introduced in \cite{douglas1999nonconforming}, following \cite{lee2017nonconforming}. 
We only consider two--dimensional elliptic boundary problems here, but the framework can be extended to higher dimensional cases and used for other multiscale methods. 

Let $U$ be any open subset of $\mathbb{R}^2$. Denote the seminorm, norm, and inner product of the Sobolev space $H^k(U)$ by $|\cdot|_{k,U}$, $||\cdot||_{k,U}$, and $(\cdot,\cdot)_{k,U},$ respectively. For the space $H^0(U) = L^2(U)$, we abbreviate $(\cdot,\cdot)_{0,U}$ as $(\cdot,\cdot)_{U}$. 
Given $f \in H^{-1}(\Omega)$, consider the following elliptic boundary problem:
\begin{equation}\label{eq:2d-ell}
\left\{
\begin{aligned}
-\div \big( \kappa(\textbf{x}) \grad u \big) = f & \text{ in } \Omega,\\
u = 0 & \text{ on }  \partial \Omega,
\end{aligned}
\right.
\end{equation}
where $\Omega$ is a simply connected polygonal domain in $\mathbb{R}^2$, and $\kappa$ is a highly heterogeneous coefficient. 
The weak formulation of \eqref{eq:2d-ell} is to seek $u \in H_0^1(\Omega)$ such that
\begin{equation}\label{eq:2d-weakform}
a(u,v) = F(v), \quad v\in H_0^1(\Omega)
\end{equation}
where $a(u,v)= \int_\Omega \kappa\grad u \cdot \grad v\, d\bx$ and $F(v)=\int_\Omega fv \,d\bx.$
Let $\mathcal{T}_h:=\bigcup_{j=1}^{N_h}\{T_j\}$ be a family of shape regular triangulations of $\Omega$ and $V_h$ be a finite element basis function space based on $\mathcal{T}_h$. The mesh parameter $h$ is given by
\begin{equation*}
h = \max_{j=1,\cdots,N_h} \diam(T_j).
\end{equation*} 
Let $V_{h,0}$ be the set of all elements in $V_h$, whose DOFs related to the boundary $\partial \Omega$ vanish. Then the finite element approximation of \eqref{eq:2d-weakform} is defined as the solution $u_h \in V_{h,0}$ of the discrete problem
\begin{equation}\label{eq:2d-discreteform}
a_h(u_h,v_h) = F_h(v_h), \quad v_h \in V_{h,0},
\end{equation}
where $a_h(u,v)= \sum_{T_j \in \mathcal{T}_h} \int_{T_j} \kappa\grad u \cdot \grad v \,d\bx$ and $F_h(v)=\sum_{T_j \in \mathcal{T}_h} \int_{T_j} fv\, d\bx.$ 
In GMsFEM, we also need to have another shape regular triangulations $\mathcal{T}^H:=\bigcup_{J=1}^{N^H} \{T^J\}$ of $\Omega$. We suppose that every $T^J \in \mathcal{T}^H$ consists of a connected union of $T_j \in \mathcal{T}_h$, which makes $\mathcal{T}_h$ be a refinement of $\mathcal{T}^H$. Here, and in what follows, we refer two triangulations $\mathcal{T}_h$ and $\mathcal{T}^H$ to micro--scale and macro--scale triangulations, respectively. The macro mesh parameter $H$ is given by 
\begin{equation*}
H = \max_{J=1,\cdots,N^H} \diam(T^J).
\end{equation*} 
Let $V^H$ be a finite element basis function space associated with $\mathcal{T}^H$, and $V^{H,0}$ be the set of all elements in $V^H$, whose DOFs related to $\partial\Omega$ vanish. Then the GMsFE approximation of \eqref{eq:2d-weakform} is equivalent to find $u^H \in V^{H,0}$ such that
\begin{equation}\label{eq:2d-GMsFEM}
a_h(u^H,v^H) = F_h(v^H), \quad v^H \in V^{H,0}.
\end{equation}
\subsection{Framework of nonconforming GMsFEM}
The success in using the GMsFEM depends on the construction of corresponding finite element space. $V^H$ must contain the essential properties of $V_h$ as well as the coefficient $\kappa$, while the dimension of $V^H$ is significantly reduced compared to that of $V_h$.

The GMsFE space $V^H$ is composed of
two components. The first one is the offline function space which is a
spectral decomposition of the snapshot function space,
and used to represent the solution in each macro element.
The second one is the moment function space which is used to impose
continuity between local offline functions.
 Let a micro--scale basis function space $V_h$ be given. For each macro element $T \in \mathcal{T}^H$, denote the restriction of $V_h$ to $T$ by $V_h(T)$. Also denote the set of all macro edges in $\mathcal{T}^H$ by $\mathcal{E}^H:=\bigcup_{J=1}^{N^E}\{E^J\}$, and the set of all interior macro edges by $\mathcal{E}^{H,0}$. Then the process of constructing GMsFE spaces is organized into the following framework:
\begin{enumerate}
\item Construct a snapshot function space $V^{\text{snap}}=\bigcup_{T \in \mathcal{T}^H}V^{\text{snap}}(T)$, where $V^{\text{snap}}(T)$ is a subspace of $V_h(T)$ for each macro element $T \in \mathcal{T}^H.$ In general, $V^{\text{snap}}(T)$ is chosen to be the span of $\kappa-$harmonic functions in $T$. 
\item Construct an offline function space $V^{\text{off}}=\bigcup_{T \in \mathcal{T}^H}V^{\text{off}}(T)$, where $V^{\text{off}}(T)$ is obtained by applying a suitable dimension reduction technique to $V^{\text{snap}}(T)$ for each macro element $T \in \mathcal{T}^H$. We may use generalized eigenvalue decomposition, the singular value decomposition, the proper orthogonal decomposition, and so on.
\item Construct a moment function space $\mathcal{M}_H=\bigcup_{E \in \mathcal{E}^H} \mathcal{M}_H(E)$. $\mathcal{M}_H(E)$ may consist of local $\kappa-$harmonic functions in appropriate neighborhood of $E$. The moment functions are used to glue offline functions through each macro interior edge $E \in \mathcal{E}^{H,0}$.
\item Construct the nonconforming GMsFE spaces $V^H$ and $V^{H,0}$ based on $V^{\text{off}}$ and $\mathcal{M}_H$. They are defined as
\begin{align*}
V^H &= \Big\{\psi \in V^{\text{off}} \,\Big|\, \<[\psi]_E,\zeta\>_E=0,\; \forall \zeta \in \mathcal{M}_H(E), \; \forall E \in \mathcal{E}^{H,0} \Big\}, \\
V^{H,0} &=\Big\{\psi \in V^{\text{off}} \,\Big|\, \<[\psi]_E,\zeta\>_E=0,\; \forall \zeta \in \mathcal{M}_H(E), \; \forall E \in \mathcal{E}^{H} \Big\}.
\end{align*}
Here $[\psi]_E$ stands for the jump of $\psi$ across macro edge $E$.
\end{enumerate} 

\subsection{Notations for the DSSY element and edge-based basis functions}
We implement the rectangular DSSY (Douglas-Santos-Sheen-Ye) nonconforming element to construct micro--scale basis function space $V_h$. Since the DSSY elements are based on the horizontal--type and
vertical--type edges, it is more natural to label the edges and basis functions in these two types. For $j=1,\cdots,N_x$ and
$k=1,\cdots,N_y,$  
let $\O_{jk}$ be the $(j,k)^{\text{th}}$ rectangle with the four vertices
$(x_j,y_k),(x_{j-1},y_k),(x_{j-1},y_{k-1}),$ and $(x_j,y_{k-1}),$ and
vertical edges $e_{jk}, e_{j-1,k},$ and horizontal edges $ f_{jk},
f_{j,k-1}.$
Edge-based basis functions are given respectively. That is, the two basis functions
$\phi_{j,k}$ and $\phi_{j-1,k}$ are associated with the edges $e_{jk}$
and $e_{j-1,k},$ and the two basis functions
$\psi_{j,k}$ and $\psi_{j,k-1}$ associated with the edges $f_{jk}$
and $f_{j,k-1}.$ See \figref{fig:dssy-basis-type} for an illustration.

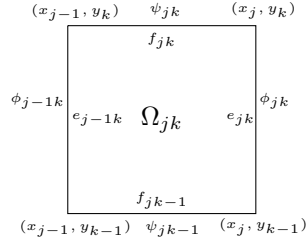
\begin{figure}[htb!]
    \centering
    \begin{tikzpicture}[scale = 0.5]
        \draw (0,5) -- (5,5) -- (5,0) -- (0,0) -- (0,5);
        \node (mid) at (2.5,2.5) {$\O_{jk}$};
        \node (phi1) at (5.2, 5.4) {\tiny $(x_j,y_k)$};
        \node (phi1) at (0.2, 5.35) {\tiny $(x_{j-1},y_k)$};
        \node (phi1) at (5.2, -0.35) {\tiny $(x_{j},y_{k-1})$};
        \node (phi1) at (0.2, -0.4) {\tiny $(x_{j-1},y_{k-1})$};
        \node (phi1) at (4.6, 2.5) {\tiny $e_{jk}$};
        \node (phi1) at (5.5, 3.) {\tiny $\phi_{jk}$};
        \node (phi3) at (0.8, 2.5) {\tiny $e_{j-1k}$};
        \node (phi3) at (-0.8, 3.) {\tiny $\phi_{j-1k}$};
        \node (phi2) at (2.5, 4.6) {\tiny $f_{jk}$};
        \node (phi2) at (2.6, 5.4) {\tiny $\psi_{jk}$};
        \node (phi4) at (2.5, 0.35) {\tiny $f_{jk-1}$}; 
        \node (phi4) at (2.8, -0.4) {\tiny $\psi_{jk-1}$};
    \end{tikzpicture}
    \caption{The basis functions $\phi_{j'k}$ associated with vertical type
      edge $e_{j'k}$ for $j'=j-1,j$ and the
      basis functions $\psi_{jk'}$ associated with horizontal type
      edge $f_{jk'}$ for $k'=k-1,k$ in element $\O_{jk}$ with vertices $(x_{j'},y_{k'})$'s.}
    \label{fig:dssy-basis-type}
\end{figure}

\section{Algebraic Multiscale Method}
In this section, we design an algebraic multiscale method for two--dimensional elliptic problems.
We assume that all the components in $A$ and $b$ in the
  micro--scale linear system $Ax = b$ are known, which 
is constructed by the finite element method to find
  $u_h \in V_h$ such that
\begin{equation}\label{eq:2d-micro-galerkin}
a_h(u_h, v_h) = (f,v_h)_{\O} \quad \forall v_h \in V_h,
\end{equation}
where $a_h(u_h,v_h) = \sum_{j,k} (\kappa \grad u_h,\grad
v_h)_{\O_{jk}}.$ 
Here we do not assume that any a priori knowledge is given for the 
coefficient $\kappa$ and the exterior source term $f$.

On our procedure, we use the GMsFEM under the following assumptions:
\begin{enumerate}
\item The micro--scale mesh is rectangular.
\item The micro--scale linear system is constructed by the DSSY nonconforming
  finite element method.
\item The coefficient $\kappa$ is assumed to be constant on each
  micro element.
\end{enumerate}


Let $V_h=\Span(\{\psi_{jk}\}_{j,k} \bigcup \{\phi_{jk}\}_{j,k})$ be the DSSY nonconforming finite element space associated with $\Tau_h$, where $\psi_{j,k}$ is the DSSY basis functions associated with DOF at midpoint on horizontal micro edge $f_{jk}$, and $\phi_{j,k}$ is the DSSY basis functions associated with DOF at midpoint on vertical micro edge $e_{jk}$ (see \figref{fig:dssy-basis-type}). Then the micro--scale solution $u_h$ is
represented by
\begin{eqnarray}\label{eq:phi-psi-rep}
  u_h = \sum_{j'=0}^{n_x}\sum_{k'=1}^{n_y} \alpha_{j'k'}\phi_{j'k'}
+ \sum_{j'=1}^{n_x}\sum_{k'=0}^{n_y} \beta_{j'k'}\psi_{j'k'}.
\end{eqnarray}
Here, and in what follows, the indices $\alpha$ and $\beta$ stand for
the coefficients for horizontal and vertical edges, respectively.
Test \eqref{eq:2d-micro-galerkin} with $u_h$ represented by
\eqref{eq:phi-psi-rep} against $v_h=\phi_{jk}$ and $v_h=\psi_{jk}$ to obtain
\begin{subeqnarray}\label{eq:phi-psi-vh}
&&\sum_{j'=0}^{n_x}\sum_{k'=1}^{n_y} \alpha_{j'k'}a_h(\phi_{j'k'},\phi_{jk})
+ \sum_{j'=1}^{n_x}\sum_{k'=0}^{n_y} \beta_{j'k'}
  a_h(\psi_{j'k'},\phi_{jk})
  \nonumber\\
&&\qquad\qquad\qquad  = (f,\phi_{jk}),  \quad j =    0,\cdots,n_x, \, k = 1,\cdots, n_y,\\
&&\sum_{j'=0}^{n_x}\sum_{k'=1}^{n_y} \alpha_{j'k'}a_h(\phi_{j'k'},\psi_{jk})
+ \sum_{j'=1}^{n_x}\sum_{k'=0}^{n_y} \beta_{j'k'}
  a_h(\psi_{j'k'},\psi_{jk})\nonumber\\
  &&\qquad\qquad\qquad  = (f,\psi_{jk}),
   \quad j =    1,\cdots,n_x,\, k = 0,\cdots, n_y.
\end{subeqnarray}
Taking into account of the supports of basis functions, we get the
following linear system for the micro-scale elliptic problem:
\begin{subeqnarray}\label{eq:lin-sys}
    &&A^{\alpha,\alpha}_{jkjk}\alpha_{jk}+
    A^{\alpha,\alpha}_{j-1kjk}\alpha_{j-1k}+
    A^{\alpha,\alpha}_{j+1kjk}\alpha_{j+1k}\nonumber\\
    &&+A^{\beta,\alpha}_{jkjk}\beta_{jk}+
    A^{\beta,\alpha}_{jk-1jk}\beta_{jk-1}+
    A^{\beta,\alpha}_{j+1kjk}\beta_{j+1k}+
    A^{\beta,\alpha}_{j+1k-1jk}\beta_{j+1k-1}\nonumber\\
&&    \qquad\qquad\qquad =f^{\alpha}_{jk},
\quad    j =    0,\cdots,n_x, \, k = 1,\cdots, n_y,\\
        &&A^{\beta,\beta}_{jkjk}\beta_{jk}+
    A^{\beta,\beta}_{jk-1jk}\beta_{jk-1}+
    A^{\beta,\beta}_{jk+1jk}\beta_{jk+1}\nonumber\\
    &&+A^{\alpha,\beta}_{jkjk}\alpha_{jk}+
    A^{\alpha,\beta}_{j-1kjk}\alpha_{j-1k}+
    A^{\alpha,\beta}_{jk+1jk}\alpha_{jk+1}+
    A^{\alpha,\beta}_{j-1k+1jk}\alpha_{j-1k+1} \nonumber  \\
    &&\qquad\qquad\qquad
   =f^{\beta}_{jk},
    \quad    j =    1,\cdots,n_x, \, k = 0,\cdots, n_y,
      \end{subeqnarray}
      where
  $A^{\alpha,\alpha}_{j'k'jk} = a_h(\phi_{j'k'},\phi_{jk})$,
  $A^{\alpha,\beta}_{j'k'jk} = a_h(\phi_{j'k'},\psi_{jk})$,
  $A^{\beta,\alpha}_{j'k'jk} = a_h(\psi_{j'k'},\phi_{jk})$,
  $A^{\beta,\beta}_{j'k'jk} = a_h(\psi_{j'k'},\psi_{jk})$,
  $f^\alpha_{jk}=(f,\phi_{jk})$
and  $f^\beta_{jk}=(f,\psi_{jk})$.

A direct computation of the component of the stiffness matrix on
$\O_{jk}=(x_{j-1}, x_{j})\times(y_{k-1}, y_{k})$ gives
\begin{subeqnarray}\label{eq:stif_comp}
(\grad{\phi_{jk}},\grad{\phi_{jk}})_{\O_{jk}}&=&\frac{37}{28}\frac{h_{x_j}}{h_{y_k}}+\frac{65}{28}\frac{h_{y_k}}{h_{x_j}}, \\
(\grad{\psi_{jk}},\grad{\phi_{jk}})_{\O_{jk}}&=&(\grad{\psi_{jk-1}},\grad{\phi_{jk}})_{\O_{jk}}=-\frac{37}{28}\frac{h_{x_j}^{2}+h_{y_k}^{2}}{h_{x_j}h_{y_k}}, \\
(\grad{\phi_{j-1k}},\grad{\phi_{jk}})_{\O_{jk}}&=&\frac{37}{28}\frac{h_{x_j}}{h_{y_k}}+\frac{9}{28}\frac{h_{y_k}}{h_{x_j}}.
\end{subeqnarray}
Analogous components are obtained by replacing $\O_{jk}$ by $\O_{jk-1}.$
Furthermore, we have similar results for $\psi_{jk}$; just $h_{x_j}$ and $h_{y_k}$ are exchanged in \eqref{eq:stif_comp}.
Set $\gamma_{jk}=\frac{h_{y_k}}{h_{x_j}}$. By a direct computation, one gets the following expressions:
\begin{equation*}
    \begin{aligned}
    &A^{\beta,\beta}_{jkjk}=
    \Big(\frac{65}{28}\frac{1}{\gamma_{jk}}+\frac{37}{28}\gamma_{jk}\Big)\kappa_{jk} 
    + \Big(\frac{65}{28}\frac{1}{\gamma_{jk+1}}+\frac{37}{28}\gamma_{jk+1}\Big)\kappa_{jk+1},\\
    &A^{\beta,\beta}_{jk-1jk}=
    \Big(\frac{9}{28}\frac{1}{\gamma_{jk}}+\frac{37}{28}\gamma_{jk}\Big)\kappa_{jk},\\
    &A^{\beta,\beta}_{jk+1jk}=
    \Big(\frac{9}{28}\frac{1}{\gamma_{jk+1}}+\frac{37}{28}\gamma_{jk+1}\Big)\kappa_{jk+1},\\
    &A^{\alpha,\beta}_{jkjk}=
    A^{\alpha,\beta}_{j-1kjk}=
    -\frac{37}{28}\Big(\frac{1}{\gamma_{jk}}+\gamma_{jk}\Big)\kappa_{jk},\\
    &A^{\alpha,\beta}_{jk+1jk}=
    A^{\alpha,\beta}_{j-1k+1jk}=
    -\frac{37}{28}\Big(\frac{1}{\gamma_{jk+1}}+\gamma_{jk+1}\Big)\kappa_{jk+1}.
    \end{aligned}
\end{equation*}

First we need to deduce the coefficient values $\kappa_{jk}$ and mesh sizes
$h_{x_j}, h_{y_k}$ from the micro--scale linear system \eqref{eq:lin-sys}.
The result is formulated as the following proposition.
\begin{proposition}
  $\kappa_{jk}$ and $h_{x_j}, h_{y_k}$ can be determined from the
  linear system \eqref{eq:lin-sys}:
  \begin{subeqnarray}\label{eq:coeff-hx-hy}
        \kappa_{jk}&=&-(A^{\beta,\beta}_{jk-1jk}+A^{\alpha,\beta}_{jkjk})
        \gamma_{jk},\\
        {h_{x_{j}}} &=& \frac1{ \sum_{k=1}^{n_{y}} \gamma_{jk}},\,
        {h_{y_{k}}} = \frac1{\sum_{j=1}^{n_{x}} \frac{1}{\gamma_{jk}}},\\
        \gamma_{jk}&=&\sqrt{\frac{28}{37}\Big(\frac{A^{\alpha,\beta}_{jkjk}}{A^{\beta,\beta}_{jk-1jk}+A^{\alpha,\beta}_{jkjk}}\Big)-1 }.
      \end{subeqnarray}
\end{proposition}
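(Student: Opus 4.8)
The plan is a direct algebraic inversion of the closed-form expressions for the stiffness entries displayed just above the statement, which were themselves obtained from \eqref{eq:stif_comp} together with the assumption that $\kappa$ is constant on each micro element. The crux is to isolate the right linear combination of known entries. I would first add $A^{\beta,\beta}_{jk-1jk}=\big(\tfrac{9}{28}\tfrac1{\gamma_{jk}}+\tfrac{37}{28}\gamma_{jk}\big)\kappa_{jk}$ and $A^{\alpha,\beta}_{jkjk}=-\tfrac{37}{28}\big(\tfrac1{\gamma_{jk}}+\gamma_{jk}\big)\kappa_{jk}$: the $\gamma_{jk}$-linear terms cancel exactly and the $\tfrac1{\gamma_{jk}}$-coefficients combine as $\tfrac{9}{28}-\tfrac{37}{28}=-1$, leaving the clean identity $A^{\beta,\beta}_{jk-1jk}+A^{\alpha,\beta}_{jkjk}=-\kappa_{jk}/\gamma_{jk}$. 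Everything else is bookkeeping around this one cancellation.

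Next I would form the ratio $R_{jk}:=A^{\alpha,\beta}_{jkjk}\big/\big(A^{\beta,\beta}_{jk-1jk}+A^{\alpha,\beta}_{jkjk}\big)$. The factor $\kappa_{jk}$ drops out and one is left with $R_{jk}=\tfrac{37}{28}\big(\tfrac1{\gamma_{jk}}+\gamma_{jk}\big)\gamma_{jk}=\tfrac{37}{28}\big(1+\gamma_{jk}^{2}\big)$, which inverts to $\gamma_{jk}=\sqrt{\tfrac{28}{37}R_{jk}-1}$, i.e. \eqref{eq:coeff-hx-hy}c. At this point I would check well-posedness: since $\kappa_{jk}>0$ and $\gamma_{jk}>0$ in the underlying model, the denominator $A^{\beta,\beta}_{jk-1jk}+A^{\alpha,\beta}_{jkjk}=-\kappa_{jk}/\gamma_{jk}$ is nonzero so the division is legitimate, the radicand equals $\gamma_{jk}^{2}\ge 0$, and the positive square root is the correct branch. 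Substituting the now-known $\gamma_{jk}$ back into the identity of the first step gives $\kappa_{jk}=-\big(A^{\beta,\beta}_{jk-1jk}+A^{\alpha,\beta}_{jkjk}\big)\gamma_{jk}$, which is \eqref{eq:coeff-hx-hy}a. (The remaining entries $A^{\beta,\beta}_{jkjk}$, $A^{\beta,\beta}_{jk+1jk}$, $A^{\alpha,\beta}_{jk+1jk}$ then provide consistency checks but are not needed.)

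Finally, to recover the actual mesh sizes rather than only their ratios $\gamma_{jk}=h_{y_k}/h_{x_j}$, I would invoke a single length normalization: the rectangular domain $\Omega$ has known side lengths, say unit, so that $\sum_{j=1}^{n_x}h_{x_j}=1$ and $\sum_{k=1}^{n_y}h_{y_k}=1$. Then for fixed $j$, $\sum_{k=1}^{n_y}\gamma_{jk}=\tfrac1{h_{x_j}}\sum_{k}h_{y_k}=\tfrac1{h_{x_j}}$, and for fixed $k$, $\sum_{j=1}^{n_x}\tfrac1{\gamma_{jk}}=\tfrac1{h_{y_k}}\sum_{j}h_{x_j}=\tfrac1{h_{y_k}}$, which yields \eqref{eq:coeff-hx-hy}b. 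I would also note that the prescription is self-consistent: a genuine rectangular mesh produces a rank-one matrix $(\gamma_{jk})$ of the form $h_{y_k}/h_{x_j}$, so the two formulas for $h_{x_j}$ and $h_{y_k}$ indeed reproduce every $\gamma_{jk}$, and the system is overdetermined but consistent. The only real subtlety here — not an obstacle — is the need for this external length scale; the computation otherwise reduces to the two-line manipulation of \eqref{eq:stif_comp} described above.
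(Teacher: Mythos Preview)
Your approach is essentially the paper's: the same two-entry combination $A^{\beta,\beta}_{jk-1jk}+A^{\alpha,\beta}_{jkjk}=-\kappa_{jk}/\gamma_{jk}$, the same ratio to isolate $\gamma_{jk}$, and the same normalization $\sum_k h_{y_k}=\sum_j h_{x_j}=1$ to pass from ratios to absolute mesh sizes.

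The one point you overlook is \emph{availability}. Your manipulation assumes both $A^{\beta,\beta}_{jk-1jk}$ and $A^{\alpha,\beta}_{jkjk}$ can be read off the assembled system, but after the boundary DOFs are eliminated this pair is not present for every element; at the four corner elements only a single relevant off-diagonal entry survives. The paper addresses this explicitly: away from corners at least two usable entries remain (possibly a different pair than the one you chose, but the same algebra goes through), while at each corner it uses the rank-one identity $\gamma_{jk}=\gamma_{jk+1}\gamma_{j+1k}/\gamma_{j+1k+1}$ to recover the missing ratio from its three already-computed neighbors, and then extracts $\kappa_{jk}$ from the one surviving cross-entry. You do note the rank-one structure at the end, but only as a consistency check; the paper actually deploys it as the recovery mechanism for the corners, and cites it as the reason the rectangular-mesh assumption is needed.
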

\begin{proof}
At each rectangular elements, except for the 4 corner elements, we can derive at least two information about $\kappa_{jk}$ from the stiffness matrix. One is $A^{\alpha,\alpha}_{j-1kjk}$ or $A^{\beta,\beta}_{jk-1jk}$ and the other is one of $A^{\alpha,\beta}_{jkjk}$, $A^{\alpha,\beta}_{jkjk-1}$, $A^{\alpha,\beta}_{j-1kjk}$, $A^{\alpha,\beta}_{j-1kjk-1}.$ 
 For example, when we have $A^{\beta,\beta}_{jk-1jk}$ and $A^{\alpha,\beta}_{jkjk}$ for left vertical element, we can derive the following equalities:
\begin{equation*}
    A^{\beta,\beta}_{jk-1jk}+A^{\alpha,\beta}_{jkjk}=
    -\frac{\kappa_{jk}}{\gamma_{jk}}, \qquad
        \frac{A^{\alpha,\beta}_{jkjk}}{A^{\beta,\beta}_{jk-1jk}+A^{\alpha,\beta}_{jkjk}}=
    \frac{37}{28}(1+\gamma^{2}_{jk}).
\end{equation*}
Hence $\kappa_{jk}$ and $\gamma_{jk}$ is derived by
\begin{equation*}
      \kappa_{jk}=-(A^{\beta,\beta}_{jk-1jk}+A^{\alpha,\beta}_{jkjk})\, \gamma_{jk},\quad
    \gamma_{jk}=\sqrt{\frac{28}{37}\Big(\frac{A^{\alpha,\beta}_{jkjk}}{A^{\beta,\beta}_{jk-1jk}+A^{\alpha,\beta}_{jkjk}}\Big)-1 }.
\end{equation*}
Other cases follows in a similar way.

At the corner, we cannot get the value of $A^{\alpha,\alpha}_{j-1kjk}$ or $A^{\beta,\beta}_{jk-1jk}$ from the stiffness matrix. That is, there is only one valid information about
$\kappa_{jk}$ and $\gamma_{jk}$.
In this case, we need the ratio information from adjacent elements and this is the reason why we adopt the rectangular mesh. First, we can derive $\gamma_{jk}$ using following relation about ratio,
$\gamma_{jk}=\frac{\gamma_{jk+1}\gamma_{j+1k}}{\gamma_{j+1k+1}}$.
Since the above formula are valid at every micro element except corners, three of $\gamma_{jk}$, $\gamma_{jk+1}$, $\gamma_{j+1k}$, $\gamma_{j+1k+1}$ are known and the unknown one would be determined by the ratio information.
Then $\kappa_{jk}$ can be easily derived from the ratio information and one of $A^{\alpha,\beta}_{jkjk}$, $A^{\alpha,\beta}_{jkjk-1}$, $A^{\beta,\alpha}_{jkjk}$, $A^{\beta,\alpha}_{jkj-1k}$. Now we have every $\kappa_{jk}$ and $\gamma_{jk}$ value across all elements, and mesh sizes $h_{x_j}$ and $h_{y_k}$ are determined by following equations:
\begin{equation*}
      \sum_{k=1}^{n_{y}} \gamma_{jk} = \frac{\sum_{k=1}^{n_{y}} h_{y_{j}}}{h_{x_{j}}} = \frac{1}{h_{x_{j}}},\quad
   \sum_{j=1}^{n_{x}} \frac{1}{\gamma_{jk}} = \frac{\sum_{j=1}^{n_{x}}
     h_{x_{j}}}{h_{y_{k}}} = \frac{1}{h_{y_{k}}}.
  \end{equation*}
  This completes the proof.
\end{proof}


\subsection{Construction of GMsFE spaces}
In this section, we present the detailed procedure for constructing
GMsFE spaces using the approximated
values $\kappa_{jk},h_{x_j},$ and $ h_{y_k},$ obtained as in \eqref{eq:coeff-hx-hy}.
Denote by $\phi_{jk}^T$ and $\psi_{jk}^T$ the basis functions on macro
element $T \in \mathcal{T}^H$ associated with macro vertical and horizontal edges, respectively.

\
\subsection{Snapshot function space $V^{\text{snap}}$}\label{sec:snap}
We first construct local snapshot function space $V^{\text{snap}}(T)$ in each macro element $T \in \mathcal{T}^H$. Since snapshot functions are used to compute multiscale basis functions, we may choose $V^{\text{snap}}(T)$ as all micro--scale basis functions in $T$. 
Or smaller space such as the span of $\kappa-$harmonic functions in $T$ can be considered to reduce the cost of constructing $V^H$. 

Let $\widetilde{u}_l^T \in V_h(T)$ be the solutions of following local $\kappa-$harmonic problems:
\begin{equation}\label{eq:snapshot}
\left\{
\begin{aligned}
-\div \big( \kappa(\bx) \grad \widetilde{u}_l^T \big) = 0 & \text{ in } T,\\
\widetilde{u}_l^T = \delta_l^T & \text{ on }  \p T,
\end{aligned}
\right.
\end{equation}
where $\delta_l^T \in V_h(T)$ is the function which equals to one for the $j-$th micro--scale mesh DOF on $\partial T$ and zeros for the other DOFs on $\partial T$. Observe that $\delta_l^T$ is one of $\phi_{0k}^T,\phi_{n_x^T k}^T, \psi_{j0}^T,\psi_{j n_y^T}^T$ for $j=1,\cdots,n_x^T$ and $k=1,\cdots,n_y^T.$ That is, $\widetilde{u}_l^T$ is the solution of
\begin{equation}\label{eq:snapshot-s}
a_T(\widetilde{u}_l^T, v^T) = 0 \quad \forall \, v^T \in V_{h,0}(T)
\end{equation}
satisfying $\widetilde{u}_l^T-\delta_l^T \in V_{h,0}(T),$ 
where $a_T(u,v) = \sum_{T_j \in T} \int_{T_j} \kappa\grad u \cdot \grad v \,d\bx$.
 
Let $\widetilde{u}_l^T \in V_h(T)$ is represented by 
\begin{equation*}
\widetilde{u}_l^T = \sum_{j'=0}^{n_x^T}\sum_{k'=1}^{n_y^T} \alpha_{j'k'}^T\phi_{j'k'}^T
+ \sum_{j'=1}^{n_x^T}\sum_{k'=0}^{n_y^T} \beta_{j'k'}^T\psi_{j'k'}^T.
\end{equation*}
Then \eqref{eq:snapshot-s} leads to the following equations by setting $v^T = \phi_{jk}^T$ and $v^T = \psi_{jk}^T$:
\begin{subeqnarray}\label{eq:snapshot-test}
&&\sum_{j'=0}^{n_x^T}\sum_{k'=1}^{n_y^T} \alpha_{j'k'}^T a_T(\phi_{j'k'}^T,\phi_{jk}^T)
+ \sum_{j'=1}^{n_x^T}\sum_{k'=0}^{n_y^T} \beta_{j'k'}^T
  a_T(\psi_{j'k'}^T,\phi_{jk}^T) = 0, \nonumber \\
&&\qquad\qquad\qquad\qquad\qquad\qquad j = 0,\cdots,n_x^T, \, k = 1,\cdots, n_y^T,\\
&&\sum_{j'=0}^{n_x^T}\sum_{k'=1}^{n_y^T} \alpha_{j'k'}^Ta_T(\phi_{j'k'}^T,\psi_{jk}^T)
+ \sum_{j'=1}^{n_x^T}\sum_{k'=0}^{n_y^T} \beta_{j'k'}^T
  a_T(\psi_{j'k'}^T,\psi_{jk}^T) = 0, \nonumber\\
  &&\qquad\qquad\qquad\qquad\qquad\qquad j = 1,\cdots,n_x^T,\, k = 0,\cdots, n_y^T.
\end{subeqnarray}

If we take the supports of basis functions into consideration, we have the following linear system for the snapshot function $\widetilde{u}_l^T:$
\begin{subeqnarray}\label{eq:snapshot-lin-sys}
    &&\til A^{\alpha,\alpha}_{jkjk}\alpha_{jk}+
    \til A^{\alpha,\alpha}_{j-1kjk}\alpha_{j-1k}+
    \til A^{\alpha,\alpha}_{j+1kjk}\alpha_{j+1k}\nonumber\\
    &&+ \til A^{\beta,\alpha}_{jkjk}\beta_{jk}+
    \til A^{\beta,\alpha}_{jk-1jk}\beta_{jk-1}+
    \til A^{\beta,\alpha}_{j+1kjk}\beta_{j+1k}+
    \til A^{\beta,\alpha}_{j+1k-1jk}\beta_{j+1k-1}\nonumber\\
&&    \qquad\qquad\qquad = 0 ,
\quad    j =    0,\cdots,n_x^T, \, k = 1,\cdots, n_y^T,\\
        &&\til A^{\beta,\beta}_{jkjk}\beta_{jk}+
    \til A^{\beta,\beta}_{jk-1jk}\beta_{jk-1}+
    \til A^{\beta,\beta}_{jk+1jk}\beta_{jk+1}\nonumber\\
    &&+\til A^{\alpha,\beta}_{jkjk}\alpha_{jk}+
    \til A^{\alpha,\beta}_{j-1kjk}\alpha_{j-1k}+
    \til A^{\alpha,\beta}_{jk+1jk}\alpha_{jk+1}+
    \til A^{\alpha,\beta}_{j-1k+1jk}\alpha_{j-1k+1} \nonumber  \\
    &&\qquad\qquad\qquad
   = 0,
    \quad    j =    1,\cdots,n_x^T, \, k = 0,\cdots, n_y^T,
      \end{subeqnarray}
where
  $\til A^{\alpha,\alpha}_{j'k'jk} = a_T(\phi_{j'k'}^T,\phi_{jk}^T)$,
  $\til A^{\alpha,\beta}_{j'k'jk} = a_T(\phi_{j'k'}^T,\psi_{jk}^T)$,
  $\til A^{\beta,\alpha}_{j'k'jk} = a_T(\psi_{j'k'}^T,\phi_{jk}^T)$, and
  $\til A^{\beta,\beta}_{j'k'jk} = a_T(\psi_{j'k'}^T,\psi_{jk}^T)$. 
  Since each component of the system \eqref{eq:snapshot-lin-sys} can
  be computed from the approximate values of $\kappa_{jk}$ and
  $h_{x_j}, h_{y_k}$ as given in \eqref{eq:coeff-hx-hy},
  we can compute the snapshot function $\widetilde{u}_l^T$ from the
  matrix components $A^{\alpha,\beta}_{jklm}$'s and
  $A^{\beta,\beta}_{jklm}$'s in the linear system \eqref{eq:lin-sys}.
  
  Denote the number of all snapshot functions in $T$ by $\mathcal{N}^{\text{snap}}(T)$ and zero extension of $\widetilde{u}_l^T$ outside $T$ by $u_l^T$. Then the local snapshot function space $V^{\text{snap}}(T)$ is defined as the space spanned by $u_l^T$:
\begin{equation*}
V^{\text{snap}}(T)= \Span\Big\{u_l^T\in V_h(T) \,\Big|\, j=1,\cdots,\mathcal{N}^{\text{snap}}(T)\Big\}.
\end{equation*}
Then the snapshot function space $V^{\text{snap}}$ is defined as the union of such local snapshot function spaces:
\begin{equation*}
V^{\text{snap}} = \bigcup_{T \in \mathcal{T}^H} V^{\text{snap}}(T).
\end{equation*}

\subsubsection{Oversampling technique}
The oversampling technique reduces the resonance error caused by wrong
(local) boundary condition $\delta_l^T$. It consist of
the restriction 
 of the solution $u_l^{T^+}$ of \eqref{eq:snapshot} on an extended
 region $T^+$
to the original domain $T.$
We denote the local oversampled snapshot function space by $V^{\text{snap},+}(T^+)$, which is defined as
\begin{equation*}
V^{\text{snap},+}(T^+)= \Span\Big\{u_l^{T^+}\in V_h(T^+) \,\Big|\, j=1,\cdots,\mathcal{N}^{\text{snap},+}(T^+)\Big\}.
\end{equation*}
Then the oversampled snapshot function space $V^{\text{snap},+}$ is given as follows: 
\begin{equation*}
V^{\text{snap},+} = \bigcup_{T \in \mathcal{T}^H} V^{\text{snap},+}(T^+).
\end{equation*}
 Again we remark that the oversampled snapshot function space is
 completely characterized by the information on the linear system \eqref{eq:lin-sys}.

\subsection{Offline function space $V^{\text{off}}$}
An offline function space $V^{\text{off}}$ is obtained by applying
a suitable dimension reduction technique to the snapshot function
space $V^{\text{snap}}$.
For example, we may use the generalized eigenvalue decomposition.
For each macro element $T \in \mathcal{T}^H$, consider the following spectral problem to find $(\lambda_l^T,u_l^T) \in \mathbb{R} \times V^{\text{snap}}(T):$
\begin{equation}\label{eq:offline}
a_T(u_l^T,v^T) = \lambda_l^T(\kappa u_l^T, v^T)_T, \quad \forall v^T \in V^{\text{snap}}(T).
\end{equation}
Since any function $v \in V^{\text{snap}}(T)$ is represented by
\begin{equation*}
v^T = \sum_{j'=0}^{n_x^T}\sum_{k'=1}^{n_y^T} \alpha_{j'k'}^T\phi_{j'k'}^T
+ \sum_{j'=1}^{n_x^T}\sum_{k'=0}^{n_y^T} \beta_{j'k'}^T\psi_{j'k'}^T,
\end{equation*}
we can construct the linear system of \eqref{eq:offline} and calculate
the offline function $u_l^T$ from the information on the linear system
\eqref{eq:lin-sys}.

We suppose that the eigenvalues are sorted in ascending order as 
\begin{equation*}
0 \leq \lambda_1^T \leq \lambda_2^T \leq \cdots \leq \lambda_{\mathcal{N}^{\text{snap}}(T)},
\end{equation*} 
and the eigenfunctions are normalized by $(\kappa u_l^T,u_l^T)=1.$
Then the local offline function space $V^{\text{off}}(T)$ is defined as the space spanned by a number of dominant eigenfunctions $u_l^T$, which is related to $l-$th smallest eigenvalue $\lambda_l^T.$ We may choose $\mathcal{L}(T)$ eigenfunctions, where $\mathcal{L}(T)$ is considerably small number compared to $\mathcal{N}^{\text{snap}}(T)$. In short, $V^{\text{off}}(T)$ is given by
\begin{equation*}
V^{\text{off}}(T)= \Span\Big\{u_l^T \in V^{\text{snap}}(T)\,\Big|\,l=1,\cdots,\mathcal{L}(T) \Big\},
\end{equation*}
and the offline function space $V^{\text{off}}$ is defined as
\begin{equation*}
V^{\text{off}} = \bigcup_{T \in \mathcal{T}^H} V^{\text{off}}(T).
\end{equation*}

\begin{figure}[t]
\centering
\includegraphics[width=0.4\textwidth]{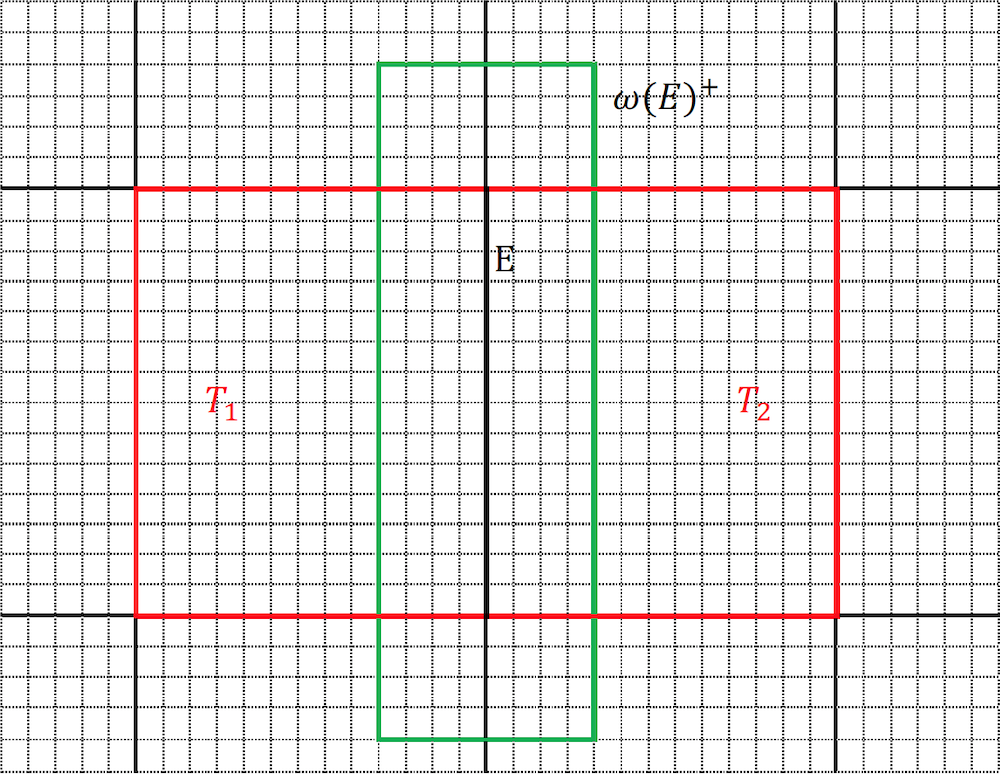}
\caption{Multiscale mesh on $\Omega$. $T_1,T_2$ are macro elements and $\omega(E)^+$ is an oversampled neighborhood of macro edge $E$.}
\label{fig:2d-multi-mesh}
\end{figure}

\subsection{Moment function space $\mathcal{M}_H$}
Since the offline functions are defined independently in each macro
element $T\in \mathcal{T}^H$, we need to glue those functions through
each macro interior edge $E \in \mathcal{E}^{H,0}$. Moment functions
play an important role here, as they are used to impose continuity
between offline functions in neighboring macro elements. On each macro
edge $E$, let $\omega(E)^+$ be an oversampled neighborhood of $E$. As
we construct local snapshot space, the moment function $\zeta_l^E \in
V_h(\omega(E)^+)$ can be obtained by solving the following
local $\kappa-$harmonic problem:
\begin{equation}\label{eq:moment-problem}
\left\{
\begin{aligned}
-\div \big( \kappa(\bx) \grad \zeta_l^E \big) = 0 & \text{ in } \omega(E)^+,\\
\zeta_l^E = \delta_l^E & \text{ on }  \partial \omega(E)^+,
\end{aligned}
\right.
\end{equation}
where $\delta_l^E \in V_h(\omega(E)^+)$ is the function which equals to one for the $l-$th micro--scale mesh DOF on $\partial \omega(E)^+$ and zeros for the other DOFs. We can construct $\zeta_l^E$ as we build the snapshot functions, by replacing $T$ to $\omega(E)^+$ in \secref{sec:snap}.
 We collect the traces of $\zeta_l^E$ on $E$ and perform a singular value decomposition to them. Denote $m(E)$ linearly independent singular vectors by $s_k^E$, where $s_k^E$ is arranged in descending order with respect to its norm:
\begin{equation}\label{eq:moment_singular_vector}
||s_k^E||_E^2 = \mu_k^E, \quad \mu_1^E \geq \mu_2^E \geq \cdots  \geq \mu_{m(E)}^E >0.  
\end{equation}
Then the local moment function space $\mathcal{M}_H(E)$ on $E$ is given by 
\begin{equation*}
\mathcal{M}_H(E)=\Span\Big\{s_k^E \, \Big| \, 1\leq k \leq \mathcal{L}(E)\Big\},
\end{equation*}
and the moment function space $\mathcal{M}_H$ is defined as
\begin{equation*}
\mathcal{M}_H = \bigcup_{E \in \mathcal{E}^H} \mathcal{M}_H(E).
\end{equation*}
It is straightforward to see that
the calculation of the moment function space $\mathcal{M}_H$ is completely 
dependent on the micro--scale linear system \eqref{eq:lin-sys}.
\subsubsection{Another method for constructing moment function space}
We may consider another method for constructing moment function space in order to reduce the computational cost. That is, the moment function space can be made up of the traces of the snapshot functions. For each macro edge $E \in \mathcal{E}^H$, denote the collection of such traces by
\begin{equation*}
\mathcal{M}_h(E) := \Span \Big\{u^T|_E\, \Big| \, u^T \in V^{\text{snap}}(T),E \subset \partial T \Big\}.
\end{equation*}
We perform a singular value decomposition to $\mathcal{M}_h(E)$ and choose the first $\mathcal{L}(E)$ dominant modes of $\mathcal{M}_h(E)$, which span the local moment function space. This method makes us avoid to solve local boundary value problems \eqref{eq:moment-problem}.

\subsection{Nonconforming GMsFE spaces $V^H$ and $V^{H,0}$}
The nonconforming GMsFE spaces $V^H$ and $V^{H,0}$ are defined as
\begin{align}
V^H &= \Big\{u \in V^{\text{off}} \,\Big|\, \<[u]_E,\zeta\>_E=0,\; \forall \zeta \in \mathcal{M}_H(E), \; \forall E \in \mathcal{E}^{H,0} \Big\}, \\
V^{H,0} &=\Big\{u \in V^{\text{off}} \,\Big|\, \<[u]_E,\zeta\>_E=0,\; \forall \zeta \in \mathcal{M}_H(E), \; \forall E \in \mathcal{E}^{H} \Big\},
\end{align}
where $[u]_E$ denotes the jump of $u$ across the edge $E.$
Since $V^{\text{off}}$ and $\mathcal{M}_{H}$ are defined as the union of local function spaces, it is possible to construct $V^H$ and $V^{H,0}$ locally. Let $E \in \mathcal{E}^{H,0}$ be a common macro edge for two macro elements $T_1$ and $T_2$ (see \figref{fig:2d-multi-mesh}.) Suppose that the local moment function space $\mathcal{M}_H(E)$ is constructed from $\kappa-$harmonic functions in $\omega(E):=T_1 \cup T_2$. Then the continuity condition for $u \in V^{\text{off}}(T_1) \cup V^{\text{off}}(T_2)$ imposed by $\mathcal{M}_H(E)$ is given as follows:
\begin{align}\label{eq:cont_cond_local}
\begin{split}
\<[u]_E,\zeta\>_E &= 0,\quad \forall \zeta \in \mathcal{M}_H(E), \\
\<u,\zeta\>_{E'} &=0,  \quad \forall \zeta \in \mathcal{M}_H(E'), \quad \forall E' \subset \partial \omega(E).
\end{split}
\end{align}
Finally we define the local GMsFE space $V^H(\omega(E))$ as
\begin{equation*}
V^H(\omega(E)) = \Big\{u \in V^{\text{off}}(T_1) \cup V^{\text{off}}(T_2)  \,\Big|\, u \text{   satisfies   } \eqref{eq:cont_cond_local} \Big\}.
\end{equation*}
Then the GMsFE space $V^{H,0}$ can be obtained by
\begin{equation*}
V^{H,0} = \bigcup_{E \in \mathcal{E}^{H,0}} V^H(\omega(E)).
\end{equation*}
$V^H$ also can be constructed similarly by considering $E \in \mathcal{E}^H$ in the above argument.
\begin{remark}
It is remarkable that there may exist macro bubble functions $u\in V^{\text{off}}(T)$ on $T$, which satisfy
\begin{equation}\label{eq:bubble_cond}
\<u,\zeta\>_{E'} = 0, \quad \forall \zeta \in \mathcal{M}_H(E'), \quad \forall E' \subset \partial T.
\end{equation}
Denote the space of macro bubble functions on $T_j$ by 
\begin{equation*}
B_H(T) = \Big\{u \in V^{\text{off}}(T) \,\Big|\, u \text{   satisfies   } \eqref{eq:bubble_cond} \Big\}.
\end{equation*}
Then the GMsFE space $V^{H,0}$ is obtained by
\begin{equation*}
V^{H,0} = \Big(\bigcup_{T \in \mathcal{T}^H} B_H(T)\Big) \bigcup \Big(\bigcup_{E \in \mathcal{E}^{H,0}} V^H\big(\omega(E)\big)\Big).
\end{equation*}
\end{remark}

\begin{remark}\label{rmk:dim_gmsfem}
The dimension of GMsFE space $V^{H,0}$ may depend on the dimension of local moment function space. For each macro element $T \in \mathcal{T}^H$, we practically take the dimension of local offline function space as
\begin{equation*}
\mathcal{L}(T) = \sum_{E'\subset\partial T}\mathcal{L}(E').
\end{equation*}
Then the dimension of $V^H\big(\omega(E)\big)$ is given by
\begin{equation*}
\dim \Big(V^H(\omega(E)) \Big) \geq \mathcal{L}(T_1) + \mathcal{L}(T_2) - \mathcal{L}(E) - \sum_{E' \subset \partial \omega(E)} \mathcal{L}(E') = \mathcal{L}(E).
\end{equation*}
If there are no macro bubble functions, it follows that 
\begin{equation*}
\dim(V^{H,0}) = \sum_{E \in \mathcal{E}^{H,0}} \mathcal{L}(E).
\end{equation*}
\end{remark}

\subsubsection{Construction of $b^M$}
Now we have the GMsFE space $V^{H,0}=\Span(\{\psi_{JK}^L\}_{J,K,L} \bigcup \{\phi_{JK}^L\}_{J,K,L})$, where $\psi_{JK}^L$ and $\phi_{JK}^L$ denote the $L$-th multiscale basis function associated with the $JK-$th horizontal macro edge $f_{JK}$ and $JK-$th vertical macro edge $e_{JK}$, respectively. Suppose that $\Omega_{JK} \cup \Omega_{J+1 K}$ is composed of $n_x^{JK} \times n_y^{JK}$ micro--scale elements. Then $\phi_{JK}^L$ is represented by
\begin{equation*}
\phi_{JK}^L = \sum_{j'=0}^{n_x^{JK}}\sum_{k'=1}^{n_y^{JK}} \alpha_{j'k'}^{JK}\phi_{j'k'}^{JK}
+ \sum_{j'=1}^{n_x^{JK}}\sum_{k'=0}^{n_y^{JK}} \beta_{j'k'}^{JK}\psi_{j'k'}^{JK},
\end{equation*}  
where $\phi_{j'k'}^{JK}$ and $\psi_{j'k'}^{JK}$ are the micro--scale basis functions of vertical and horizontal type in $\Omega_{JK} \cup \Omega_{J+1 K}$, respectively.
Therefore it is obvious that the components of $b^M$ can be derived
from the summation of that of micro--scale right hand side $b$, which is
given by $f^{\alpha}_{jk}$ and $f^{\beta}_{jk}$ in \eqref{eq:lin-sys}.
The same argument holds for $\psi_{JK}^L$, which completes the construction of $b^M$.

\section{Numerical results}
\begin{example}
Consider the following elliptic problem: 
\begin{equation}\label{eq:exam-1}
\left\{
\begin{aligned}
-\div \big( \kappa(\bx) \grad u \big) = f & \text{ in } \O,\\
u = 0 & \text{ on }  \p \O,
\end{aligned}
\right.
\end{equation}
where $\O=(0,1)^2$ and $\kappa(\bx)=1+(1+x_1)(1+x_2)+\epsilon\sin(10\pi x_1)\sin(5\pi x_2).$ The source term $f$ is generated by the exact solution
\begin{equation*}
    u(x_1,x_2) = \sin(3\pi x_1)x_2(1-x_2) + \epsilon\sin(\pi x_1/\epsilon)\sin(\pi x_2/\epsilon).
\end{equation*}
\end{example}
We compare numerical results of GMsFEM and AMS(algebraic multiscale
method). We use uniform rectangular meshes and the ratio $H/h$ is
fixed as $10$. The AMS solutions are calculated from the information on
the linear system $Ax=b$ obtained by
the application of the finite
element method based on the DSSY nonconforming element for the
elliptic problem \eqref{eq:exam-1}.

The relative energy and $L^2$ errors for the macro-scale solutions
are reported for various $\epsilon$ in Tables 4.1--3.
We observe almost same error behaviors in both methods. 
\begin{table}[h!]
\centering
\begin{tabular}{|c|c|c|c|c|c|c|}
\hline
\multirow{2}{*}{$\frac{1}{H}$} & \multirow{2}{*}{$\frac{1}{h}$} & \multirow{2}{*}{$\dim(V^{H,0})$} & \multicolumn{2}{c|}{GMsFEM} & \multicolumn{2}{c|}{AMS} \\ \cline{4-7} 
                               &                                &                                  & Rel. Energy   & Rel. $L^2$  & Rel. Energy & Rel. $L^2$ \\ \hline
5                              & 50                             & 400                              & 0.884         & 0.388       & 0.884       & 0.389      \\ \hline
10                             & 100                            & 1800                             & 0.871         & 0.363       & 0.871       & 0.363      \\ \hline
20                             & 200                            & 7600                             & 0.346         & 0.676E-01   & 0.346       & 0.678E-01  \\ \hline
40                             & 400                            & 31200                            & 0.181         & 0.181E-01   & 0.181       & 0.182E-01  \\ \hline
\end{tabular}
\caption{Convergence for $\epsilon=0.1.$}
\label{tab:ex3-2-1-eps01}
\end{table}

\begin{table}[h!]
\centering
\begin{tabular}{|c|c|c|c|c|c|c|}
\hline
\multirow{2}{*}{$\frac{1}{H}$} & \multirow{2}{*}{$\frac{1}{h}$} & \multirow{2}{*}{$\dim(V^{H,0})$} & \multicolumn{2}{c|}{GMsFEM} & \multicolumn{2}{c|}{AMS} \\ \cline{4-7} 
                               &                                &                                  & Rel. Energy   & Rel. $L^2$  & Rel. Energy & Rel. $L^2$ \\ \hline
5                              & 50                             & 400                              & 0.885         & 0.625       & 0.885       & 0.625      \\ \hline
10                             & 100                            & 1800                             & 0.355         & 0.118       & 0.355       & 0.119      \\ \hline
20                             & 200                            & 7600                             & 0.186         & 0.316E-01   & 0.186       & 0.320E-01  \\ \hline
40                             & 400                            & 31200                            & 0.940E-01     & 0.803E-02   & 0.939E-01   & 0.823E-02  \\ \hline
\end{tabular}
\caption{Convergence for $\epsilon=0.2.$}
\label{tab:ex3-2-1-eps02}
\end{table}

\begin{table}[h!]
\centering
\begin{tabular}{|c|c|c|c|c|c|c|}
\hline
\multirow{2}{*}{$\frac{1}{H}$} & \multirow{2}{*}{$\frac{1}{h}$} & \multirow{2}{*}{$\dim(V^{H,0})$} & \multicolumn{2}{c|}{GMsFEM} & \multicolumn{2}{c|}{AMS} \\ \cline{4-7} 
                               &                                &                                  & Rel. Energy   & Rel. $L^2$  & Rel. Energy & Rel. $L^2$ \\ \hline
5                              & 50                             & 400                              & 0.335         & 0.130       & 0.335       & 0.132      \\ \hline
10                             & 100                            & 1800                             & 0.173         & 0.342E-01   & 0.173       & 0.351E-01  \\ \hline
20                             & 200                            & 7600                             & 0.884E-01     & 0.879E-02   & 0.885E-02   & 0.928E-02  \\ \hline
40                             & 400                            & 31200                            & 0.444E-01     & 0.221E-02   & 0.444E-01   & 0.246E-02  \\ \hline
\end{tabular}
\caption{Convergence for $\epsilon=0.5.$}
\label{tab:ex3-2-1-eps05}
\end{table}

\begin{figure}[h!]
\centering
\includegraphics[width=0.8\textwidth]{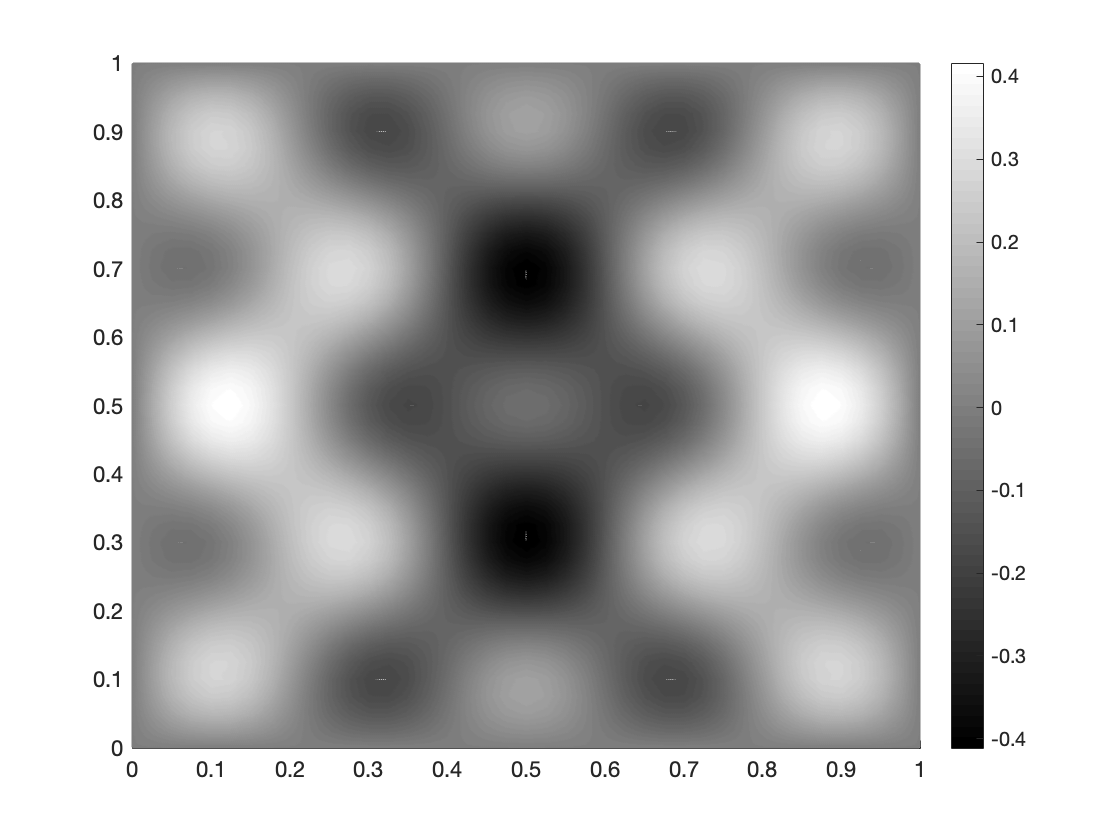}
\caption{Algebraic multiscale solution of $\epsilon=0.2$ when $1/H=40, 1/h=400$.}
\label{fig:alg-400eps02}
\end{figure}

\begin{figure}[h!]
\centering
\includegraphics[width=0.8\textwidth]{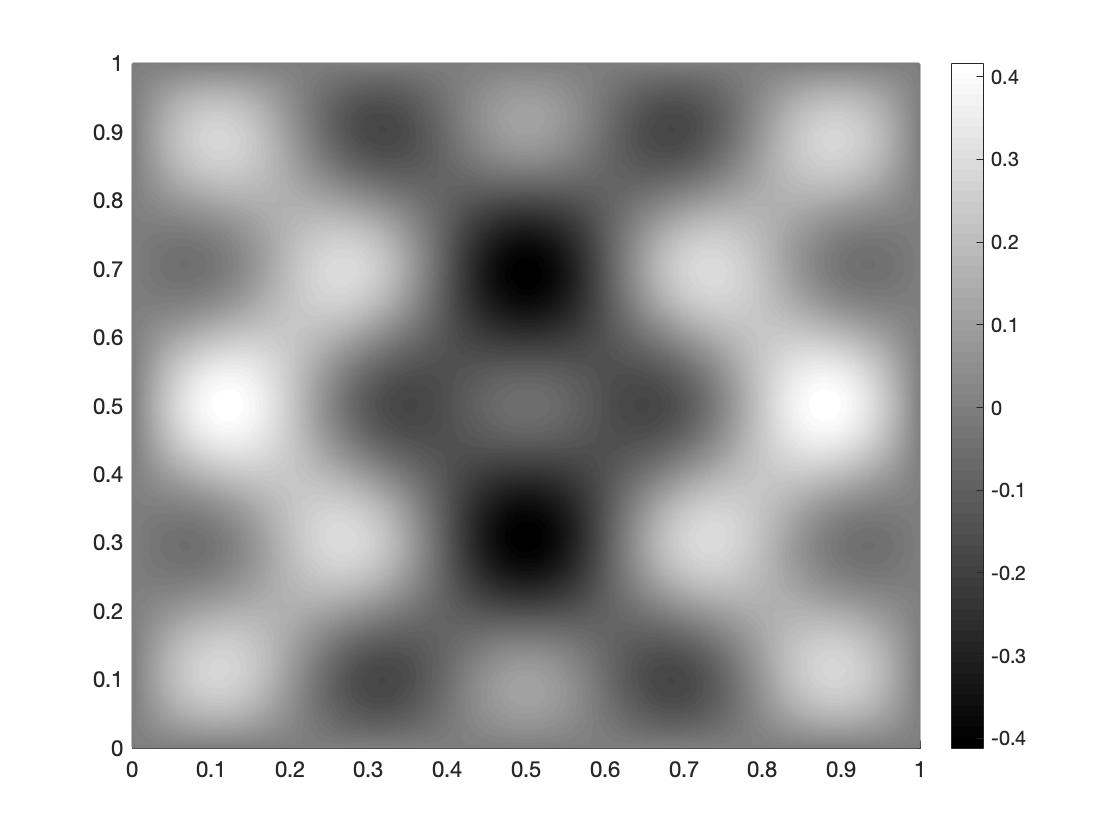}
\caption{Micro--Scale reference solution of $\epsilon=0.2$ when $1/h=400$.}
\label{fig:alg-refeps02}
\end{figure}

%
%
\section{Conclusion}
In this paper, we design the algebraic multiscale method for
two--dimensional elliptic problems. The generalized multiscale finite
element method is used based on the DSSY nonconforming finite element
space. The macro--scale linear system is constructed only using the
algebraic information on the components of the micro--scale system. The
proposed method shows almost identical numerical results with those
obtained by the GMsFEM.

\section*{Acknowledgment}
DS was supported in part by National Research Foundation of
Korea (NRF-2017R1A2B3012506 and NRF-2015M3C4A7065662). 

\normalsize
\bibliographystyle{abbrv}
\bibliography{ams}
\end{document}
%
%
%
%
%
%
%
%
%
%
